\newtheorem{thm}         {Theorem}
\newtheorem{cor}   [thm] {Corollary}
\theoremstyle{definition}
\newtheorem{rem}   [thm] {Remark}
\newtheorem{qes}   [thm] {Questions}
\numberwithin{equation}{section}
\begin{document}
\setcounter{page}{1}

\title[A new type of   functional equations]
{A new type of   functional equations  on semigroups with  involutions
}
\author[Iz. EL-Fassi]%
{Iz-iddine EL-Fassi$^{*1}$  }

\newcommand{\acr}{\newline\indent}

\address{$^{1}$Department of Mathematics,\acr
                   Faculty of Sciences and Techniques,\acr
                   Sidi Mohamed Ben Abdellah University, B.P. 2202 \acr
                   Fez,
                  Morocco}

					\address{\vspace*{5mm}		}									
\email{ $^{1}$izidd-math@hotmail.fr; iziddine.elfassi@usmba@ac.ma;\newline\hspace*{3cm} izelfassi.math@gmail.com}

\renewcommand{\thefootnote}{}
\footnotetext{ $^{*}$Corresponding author.}
\subjclass[2010]{ 39B52, 65Q20}
\keywords{Functional equations, Involution, Semigroups.}

\begin{abstract} Let $S$ be a commutative semigroup, $K$  a quadratically closed commutative field of characteristic different from $2$, $G$  a $2$-cancellative abelian group and $H$  an abelian group uniquely divisible by $2$. The aim of this paper is to determine the general solution $f:S^2\to K$ of the d'Alembert type equation:
$$
f(x+y,z+w)+f(x+\sigma(y),z+\tau(w)) =2f(x,z)f(y,w),\quad\quad (x,y,z,w\in S)
$$
the general solution $f:S^2\to G$ of the Jensen type equation:
$$
f(x+y,z+w)+f(x+\sigma(y),z+\tau(w)) =2f(x,z),\quad\quad (x,y,z,w\in S)
$$
the general solution $f:S^2\to H$ of the quadratic type equation quation:
$$
f(x+y,z+w)+f(x+\sigma(y),z+\tau(w)) =2f(x,z)+2f(y,w),\quad\quad (x,y,z,w\in S)
$$
where  $\sigma,\tau: S\to S$ are two involutions. 
\end{abstract}

\maketitle

\section{Notation and terminology }
Throughout the paper we work in the following framework and with the following notation and terminology.

 $(S,+)$ is a commutative semigroup, $K$ is a field of characteristic different from $2$,  $G$  is a $2$-cancellative abelian group and $H$  is an abelian group uniquely divisible by $2$. 
\begin{itemize}
\item [(i)] A mapping $\sigma:S\to S$ is called involution if 
 $$\sigma(\sigma(x)) = x\;\;\text{and}\;\;\sigma(x+y)=\sigma(x)+\sigma(y),\;\;x,y\in S.$$
	\item [(ii)]We say that $\chi:S^2\to K$ is a  multiplication function if
	$$\chi(x+y,z+w)=\chi(x,z)\chi(y,w),\;\;\;x,y,z,w\in S.$$
	\item [(iii)] A function  $A:S^2\to G$ is called  additive  if
	$$A(x+y,z+w)=A(x,z)+A(y,w),\;\;\;x,y,z,w\in S.$$
	\item [(iv)] We say that  $B:S^2\times S^2\to H$ is a  biadditive function if
	$$B(u+v,w)=B(u,w)+B(v,w),\;\;u,v,w\in S^2,$$
	and
	$$B(u,v+w)=B(u,v)+B(u,w),\;\;\;u,v,w\in S^2.$$
\end{itemize}
\section{Introduction}
The functional equation
\begin{align} \label{almbr}
f(x+y)+f(x-y)=2f(x)f(y),\;\;x,y\in\mathbb{R}
\end{align}
is known as the scalar d'Alembert functional equation. It has a long history going back to d'Alembert \cite{alm}.
This functional equation has an obvious extension to groups, and even to semigroups with an involution, and a satisfactory theory of its solutions exists as described in  \cite[Chapter 9]{stet} (also, see  \cite{acz,dav1,dav2} for details and references). 

In 1989, Acz\'el and Dhombres \cite{acz} proved  that a mapping $Q : X\to  Y$ satisfies the quadratic functional equation
\begin{equation}\label{quadr}
Q(x+y)+Q(x-y)=2Q(x)+2Q(y)
\end{equation}
if and only if there exists a symmetric bi-additive mapping $b : X^2\to Y$ such that $Q(x)=b(x,x)$, where
$$b(x,y):=\frac{1}{4}[Q(x+y)+Q(x-y)],\;\; x,y\in X,$$
 where $X$ and $Y$ are two vector spaces. Later, many different quadratic functional equations were solved by numerous authors; for example, see \cite{Czerwik2002,Kannappan2009,Sahoo2011}.

 Let $X$ and $Y$ be real vector spaces. For a given involution $\sigma: X \to X,$ the functional equation
\begin{equation}\label{quadreqn+}
g(x+y)+g(x+\sigma(y))=2g(x)+2g(y),
\end{equation}
is called the quadratic functional equation with involution. According to \cite[Corollary 8]{stt}, a function $g : X \to Y$ is a solution of \eqref{quadreqn+} if and only if there exist an additive function $a : X \to Y,$ and a bi-additive symmetric function $b : X^2 \to Y$ such that $a(\sigma(x)) = a(x)$, $b(\sigma(x), y) =-b(x, y)$ and $g (x) = a(x) + b(x, x)$ for all $x, y \in X.$

In 2010, Sinopoulos \cite{sino}  determined the general solution of the following functional equations: 
\begin{align} \label{almbr+}
g(x+y)+g(x+\sigma(y))=2g(x)g(y),\;\;x,y\in S,
\end{align}
\begin{align} \label{almbr+1}
g(x+y)+g(x+\sigma(y))=2g(x),\;\;x,y\in S,
\end{align}
\begin{align} \label{almbr+2}
g(x+y)+g(x+\sigma(y))=2g(x)+2g(y),\;\;x,y\in S,
\end{align}
where $(S,+)$ is a commutative semi-group and $\sigma:S\to S$ is an involution.

The  equation \eqref{almbr+1}, in the case where $S$ is an abelian group
divisible by $2$ and $\sigma(x)= -x$, is equivalent to the Jensen equation
\begin{align} \label{jn}
J\left(\frac{x+y}{2}\right)=\frac{J(x)+J(y)}{2},
\end{align}
and is easily reduced to the  Cauchy equation (see \cite{ac0}).

The equation \eqref{almbr+2}, again with $\sigma(x)= -x$, plays a fundamental role in the
characterization of inner product spaces \cite{acz0,ac0,amir,jor}.

The aim of this paper is to solve the following functional equations:
\begin{align} \label{bialm}
f(x+y,z+w)+f(x+\sigma(y),z+\tau(w))=2f(x,z)f(y,w)
\end{align}
\begin{align} \label{bijns}
f(x+y,z+w)+f(x+\sigma(y),z+\tau(w))=2f(x,z)
\end{align}
\begin{align} \label{biquad}
f(x+y,z+w)+f(x+\sigma(y),z+\tau(w))=2f(x,z)+2f(y,w)
\end{align}
for all $x,y,z,w\in S,$ where $(S,+)$ is a commutative semi-group and $\sigma,\;\tau$ are two involutions. 

The functional equation
\begin{align} \label{bimul}
f(x+y,z+w)= f(x,z)f(y,w),\;\;x,y,z,w\in S
\end{align}
corresponds to $\sigma(x)=\tau(x)=x$ in \eqref{bialm}, and the functional equation
\begin{align} \label{bibb}
f(x+y,z+w)= f(x,z)+f(y,w),\;\;x,y,z,w\in S
\end{align}
corresponds to $\sigma(x)=\tau(x)=x$ in \eqref{biquad}. In 2007, Bae and Park \cite{park2005} introduced the functional equation:
\begin{align} \label{biqua}
f(x+y,z+w)+f(x-y,z-w)=2 f(x,z)+2 f(y,w).
\end{align}
The functional equation \eqref{biqua} corresponds to $\sigma(x)=\tau(x)=-x$ in \eqref{biquad}, where $S$ is an Abelian group. When $X=Y=\mathbb{R},$ the function $f:\mathbb{R}^2  \rightarrow \mathbb{R}$ given by $f(x,y)=a x^2+bxy+c y^2$ is a solution of \eqref{biqua} where $a,b$ and $c$ are fixed real numbers.   It is clear that when $(z,w)=(x,y)$ and $\sigma=\tau$ in \eqref{bialm}, \eqref{bijns} and \eqref{biquad}, we get the  functional equations \eqref{almbr+}, \eqref{almbr+1} and \eqref{almbr+2}, where $g(x):=f(x,x)$ for all $x\in S$.  In addition, if $S$ is an Abelian group, $\sigma(x)=-x$ and $z=w=0$ in \eqref{bialm}, \eqref{bijns} and \eqref{biquad}, we find the functional equations 
\eqref{almbr}, \eqref{jn} and \eqref{quadr} with $f(x):=f(x,0)$ for all $x\in S.$

\section{General solution of Eq. \eqref{bialm}}\label{SVresults}

\begin{thm}\label{t1}
Let $(S,+)$ be a commutative semigroup,  $K$  a field of
characteristic different from $2$, and let $\sigma,\tau: S\to S$ be involutions. Then, the general solution $f : S^2 \to K$ of the  d'Alembert type equation:
\begin{align}\label{dalbtype}
f(x+y,z+w)+f(x+\sigma(y),z+\tau(w)) =2f(x,z)f(y,w),\quad x,y,z,w\in S,
\end{align}
is
\begin{align}\label{dalbtype1}
f(x,y)=\frac{\chi(x,y)+\chi(\sigma(x),\tau(y))}{2},\quad x,y\in S,
\end{align}
where $\chi : S^2 \to K$ is an arbitrary multiplicative function (i.e., $\chi(x+y,z+w)=\chi(x,z)\chi(y,w)$ for $x,y,z,w\in S$).
\end{thm}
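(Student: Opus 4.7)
The plan is to reduce the bivariate equation \eqref{dalbtype} to Sinopoulos's single-variable d'Alembert-type equation \eqref{almbr+} by exploiting the product-semigroup structure on $S^2$, so that the result becomes an immediate consequence of the known single-variable theorem.

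Sufficiency is handled by substituting $f(x,y)=[\chi(x,y)+\chi(\sigma(x),\tau(y))]/2$ into the left-hand side of \eqref{dalbtype} and expanding the four resulting products using the multiplicativity of $\chi$ (item~(ii)) together with the fact that $\sigma$ and $\tau$ are additive involutions. The four terms then pair up to produce $2f(x,z)f(y,w)$; this is a routine direct calculation requiring no new idea.

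For necessity, the key observation is that $(S^2,+)$ with coordinatewise addition is itself a commutative semigroup, and the map $\rho:S^2\to S^2$ defined by $\rho(y,w):=(\sigma(y),\tau(w))$ is an involution on this product semigroup in the sense of item~(i): $\rho\circ\rho=\mathrm{id}$ because $\sigma$ and $\tau$ are involutions, and $\rho$ respects addition because they do. Setting $u:=(x,z)$ and $v:=(y,w)$, equation \eqref{dalbtype} reads
$$f(u+v)+f(u+\rho(v))=2f(u)f(v),\qquad u,v\in S^2,$$
which is precisely Sinopoulos's equation \eqref{almbr+} on the commutative semigroup $S^2$ with involution $\rho$. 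Invoking his theorem \cite{sino} gives $f(u)=[m(u)+m(\rho(u))]/2$ with $m:S^2\to K$ satisfying $m(u+v)=m(u)m(v)$, and translating back through $u=(x,y)$, $\rho(u)=(\sigma(x),\tau(y))$, together with the fact that multiplicativity of $m$ on $S^2$ coincides with item~(ii), yields exactly \eqref{dalbtype1}. There is accordingly no substantive obstacle; the bivariate appearance of \eqref{dalbtype} is a notational illusion, and the only point demanding care is verifying that $\rho$ is a bona fide involution on the product semigroup and that the two notions of multiplicativity agree under the identification $S^2\cong S\times S$.
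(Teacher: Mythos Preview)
Your proposal is correct and takes a genuinely different, more economical route than the paper. The paper reproves Sinopoulos's argument line by line in bivariate notation: it establishes the evenness $f(\sigma(y),\tau(w))=f(y,w)$, splits into the degenerate case where $f$ is already multiplicative, and in the nondegenerate case constructs an auxiliary odd function $F$, derives the sine-addition identity $F(u+v)=F(u)f(v)+F(v)f(u)$, and then uses quadratic closure of $K$ to factor $f\pm\alpha F$ into multiplicative functions $\chi_1,\chi_2$ related by $\chi_1\circ(\sigma,\tau)=\chi_2$.

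Your reduction bypasses all of this by recognising that $(S^2,+)$ is itself a commutative semigroup and that $\rho=(\sigma,\tau)$ is an involution on it, so \eqref{dalbtype} is literally Sinopoulos's equation \eqref{almbr+} on $S^2$ and his theorem applies verbatim. This is cleaner and reveals that the bivariate theorem contains no new content beyond the single-variable case. The paper's approach, by contrast, is self-contained and makes the mechanism visible (at the cost of length), and incidentally exposes that quadratic closure of $K$ is used---a hypothesis present in the abstract and in the proof but omitted from the statement of Theorem~\ref{t1}. Your argument inherits exactly the same hypothesis from Sinopoulos's theorem, so neither approach gains or loses anything on that front.
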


\begin{proof} Replacing $(y,w)$ by $(\sigma(y),\tau(w))$ in \eqref{dalbtype}, we get
\begin{align}\label{dalbtype2}
f(x+y,z+w)+f(x+\sigma(y),z+\tau(w)) =2f(x,z)f(\sigma(y),\tau(w))
\end{align}
for all $x,y,z,w\in S$. From \eqref{dalbtype} and \eqref{dalbtype2}, we obtain
\begin{align}\label{dalbtype3}
f(\sigma(y),\tau(w))=f(y,w), \;\;\;y,w\in S,
\end{align}
and hence
\begin{align}\label{dalbtype4}
f(\sigma(x)+y,\tau(z)+w)=f(x+\sigma(y),z+\tau(w)),\;\;\;x,y,z,w\in S.
\end{align}
We have two cases: $f(x+\sigma(y),z+\tau(w))=f(x+y,z+w)$ and $f(x+\sigma(y),z+\tau(w))\neq f(x+y,z+w).$

\textbf{Case 1:} If $f(x+\sigma(y),z+\tau(w))=f(x+y,z+w),$ then by \eqref{dalbtype}, we get 
\begin{align}\label{dalbtype5}
f(x+y,z+w)=f(x,z)f(y,w),\;\;\;x,y,z,w\in S,
\end{align}
i.e., 
$$f(u+v)=f(u)f(v)$$
with $u=(x,z),$ $v=(y,w)$ and $u+v=(x+y,z+w).$ So, $f$ is a multiplicative function and can be written in the form \eqref{dalbtype1}: $f(x,y)=\frac{1}{2}(f(x,y)+f(\sigma(x),\tau(y))$.

\textbf{Case 2:} If $f(x+\sigma(y),z+\tau(w))\neq f(x+y,z+w),$ then there exist $x_0,y_0,z_0,w_0\in S$ such that
\begin{align}\label{dalbtype6}
f(x_0+y_0,z_0+w_0)-f(x_0+\sigma(y_0),z_0+\tau(w_0)) \neq 0.
\end{align}
In this case, we define the function $F:S^2\to K$ by 
$$F(x,z)=f(x+y_0,z+w_0)-f(x+\sigma(y_0),z+\tau(w_0)),\;\;x,z\in S.$$
By \eqref{dalbtype6}, we have $F(x_0,z_0)\neq 0$ and from \eqref{dalbtype3} and \eqref{dalbtype4}, we get
\begin{align}\label{dalbtype7}
F(\sigma(x),\tau(z))=-F(x,z)\;\;\text{and}\;\;F(x+\sigma(y),z+\tau(w))=-F(\sigma(x)+y,\tau(z)+w)
\end{align}
for all $x,y,z,w\in S$. Also
$$F(x+y,z+w)=f(x+y+y_0,z+w+w_0)-f(x+y+\sigma(y_0),z+w+\tau(w_0))$$
\begin{align*}
F(x+\sigma(y),z+\tau(w))=f(x+\sigma(y)&+y_0,z+\tau(w)+w_0)\\&-f(x+\sigma(y)+\sigma(y_0),z+\tau(w)+\tau(w_0))
\end{align*}
for all $x,y,z,w\in S$. Adding these equations and using \eqref{dalbtype3}, we obtain
\begin{align*}
F(x+y,z+w)+&F(x+\sigma(y),z+\tau(w))\\&=2f(y,w)[f(x+y_0,z+w_0)-f(x+\sigma(y_0),z+\tau(w_0))]
\end{align*}
for all $x,y,z,w\in S$. Hence
\begin{align}\label{dalbtype8}
F(x+y,z+w)+F(x+\sigma(y),z+\tau(w))=2f(y,w)F(x,z),\;\;x,y,z,w\in S.
\end{align}
Interchanging $x,y$ and $z,w$ in \eqref{dalbtype8}, we get
$$F(x+y,z+w)+F(y+\sigma(x),w+\tau(z))=2f(x,z)F(y,w),\;\;x,y,z,w\in S,$$
and by \eqref{dalbtype7}, we have
\begin{align}\label{dalbtype9}
F(x+y,z+w)-F(x+\sigma(y),z+\tau(w))=2f(x,z)F(y,w),\;\;x,y,z,w\in S.
\end{align}
Adding \eqref{dalbtype8} and \eqref{dalbtype9}, we find
\begin{align}\label{dalbtype10}
F(x+y,z+w)=F(x,z)f(y,w)+F(y,w)f(x,z),\;\;x,y,z,w\in S.
\end{align}
i.e.,
$$F(u+v)=F(u)f(v)+F(v)f(u),\;\;\text{with}\;\;u=(x,z),\;\;v=(y,w).$$

It follows from this equation that $F$ has the form
\begin{align}\label{dalbtype11}
F(u)=\frac{\chi_1(u)+\chi_2(u)}{2},\;\;\text{with}\;\;u=(x,z),\;\;x,z\in S,
\end{align}
where the functions $\chi_1,\chi_2 : S^2 \to K$  are multiplicative (see \cite{acz,chung,vin}).

Replacing $(x,z)$ by $(x+t,z+s)$ in \eqref{dalbtype10}, we obtain 
\begin{align}\label{dalbtype12}
F(x+t+y,z+s+w)&=F(x+t,z+s)f(y,w)+F(y,w)f(x+t,z+s)\nonumber\\&
\overset{\text{by} \eqref{dalbtype10} }{=}[F(x,z)f(t,s)+F(t,s)f(x,z)]f(y,w)\nonumber\\&\quad+F(y,w)f(x+t,z+s)
\end{align}
for all $x,y,z,w,s,t\in S.$ Replacing $(y,w)$ by $(t+y,s+w)$ in \eqref{dalbtype10}, we get 
\begin{align}\label{dalbtype13}
F(x+t+y,z+s+w)&=F(x,z)f(t+y,s+w)+F(t+y,s+w)f(x,z)\nonumber\\&
\overset{\text{by} \eqref{dalbtype10} }{=}F(x,z)f(t+y,s+w)\nonumber\\&\quad+[F(y,w)f(t,s)+F(t,s)f(y,w)]f(x,z)
\end{align}
for all $x,y,z,w,s,t\in S.$ It follows from \eqref{dalbtype12} and \eqref{dalbtype13} that
$$F(y,w)[f(x+t,z+s)-f(x,z)f(t,s)]=F(x,z)[f(t+y,s+w)-f(t,s)f(y,w)]$$
for all $x,y,z,w,s,t\in S.$

Setting $(y,w)=(x_0,z_0)$ and $h(t,s)=F(x_0,z_0)^{-1}[f(x_0+t,z_0+s)-f(x_0,z_0)f(t,s)]$. We have
\begin{align}\label{dalbtype14}
f(x+t,z+s)-f(t,s)f(x,z)=F(x,z)h(t,s),\;\;x,z,t,s\in S
\end{align}
and
$$ f(x+t,z+s)-f(t,s)f(x,z)=F(t,s)h(x,z),\;\;x,z,t,s\in S.$$
So, $F(x,z)h(t,s)=F(t,s)h(x,z),\;\; x,z,t,s\in S.$  Setting $x=x_0$ and $z=z_0$, we have
$$h(t,s)=F(x_0,z_0)^{-1}F(t,s)h(x_0,z_0).$$
Since $K$ is quadratically closed there exists an $\alpha\in K$ such that $$\alpha^2=F(x_0,z_0)^{-1}h(x_0,z_0).$$ 
Hence the equation \eqref{dalbtype14}, with $(y,w)$ in place of $(t,s)$, becomes
\begin{align}\label{dalbtype15}
f(x+y,z+w)=f(y,w)f(x,z)+\alpha^2 F(x,z)F(y,w),\;\;x,z,y,w\in S.
\end{align}
Multiplying \eqref{dalbtype10} by $\alpha$ and adding to and subtracting it from \eqref{dalbtype15}, we obtain respectively
$$f(x+y,z+w)+\alpha F(x+y,z+w)=[f(y,w)+\alpha F(y,w)][f(x,z)+\alpha F(x,z)]$$
and
$$f(x+y,z+w)-\alpha F(x+y,z+w)=[f(y,w)-\alpha F(y,w)][f(x,z)-\alpha F(x,z)].$$
So the functions $\chi_1:=f+\alpha F$ and $\chi_2:=f-\alpha F$ are multiplicative, and by addition
we get \eqref{dalbtype11}.

Now in view of \eqref{dalbtype3} and \eqref{dalbtype7} we have
\begin{align*}
\chi_1(\sigma(x),\tau(z))&=f(\sigma(x),\tau(z))+\alpha F(\sigma(x),\tau(z))\\&
f(x,z)-\alpha F(x,z)=\chi_2(x,z),\;\; x,z\in S.
\end{align*}
The converse implication follows by simple calculations. This completes the proof of the theorem.
\end{proof}
\begin{rem}
the commutativity of $S$ can be replaced by the conditions
\begin{align}\label{c1}
f(x+t+y,z+s+w)=f(x+y+t,z+w+s),\;\;x,y,z,w,s,t\in S
\end{align}
and \begin{align}\label{c2}
f(x+y,z+w)=f(y+x,w+z),\;\;x,y,z,w\in S.
\end{align}
If $S$ has a neutral element, the second of these conditions is implied by the first
one and can be omitted.
\end{rem}

\begin{cor}[\rm{\cite[Theorem 1]{sino}}]
Let $(S,+)$ be a commutative semigroup,  $K$  a field of
characteristic different from $2$, and let $\sigma: S\to S$ be involution. Then, the general solution $g : S \to K$ of the  d'Alembert  equation:
$$g(x+y)+g(x+\sigma(y)) =2g(x)g(y),\quad x,y\in S,$$
is
$$g(x)=\frac{m(x)+m(\sigma(x))}{2},\quad x\in S,$$
where $m : S \to K$ is an arbitrary multiplicative function (i.e., $m(x+y)=m(x)m(y)$ for $x,y\in S$).
\end{cor}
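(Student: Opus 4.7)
The plan is to derive the corollary directly from Theorem \ref{t1} by a lift-and-specialize argument. Given $g:S\to K$ satisfying $g(x+y)+g(x+\sigma(y))=2g(x)g(y)$, I define $f:S^2\to K$ by $f(x,z):=g(x)$, independent of the second coordinate, and take the second involution in Theorem \ref{t1} to be $\tau:=\sigma$. A direct substitution gives
\begin{align*}
f(x+y,z+w)+f(x+\sigma(y),z+\tau(w)) &= g(x+y)+g(x+\sigma(y))\\
&= 2g(x)g(y) = 2f(x,z)f(y,w),
\end{align*}
so $f$ satisfies \eqref{dalbtype}. Theorem \ref{t1} then supplies a multiplicative function $\chi:S^2\to K$ with
$$f(x,z)=\frac{\chi(x,z)+\chi(\sigma(x),\sigma(z))}{2}, \quad x,z\in S.$$

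Next I specialize to the diagonal by setting $z=x$; since $f(x,x)=g(x)$, this yields
$$g(x)=\frac{\chi(x,x)+\chi(\sigma(x),\sigma(x))}{2}, \quad x\in S.$$
I then define $m:S\to K$ by $m(x):=\chi(x,x)$. Applying the multiplicativity of $\chi$ on $S^2$ to the pair $(x,x)+(y,y)=(x+y,x+y)$ gives $m(x+y)=\chi(x+y,x+y)=\chi(x,x)\chi(y,y)=m(x)m(y)$, so $m$ is multiplicative on $S$. Moreover $m(\sigma(x))=\chi(\sigma(x),\sigma(x))$, and thus $g(x)=\tfrac{1}{2}[m(x)+m(\sigma(x))]$ in the desired form.

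The converse implication is a routine calculation: substituting $g(x)=\tfrac{1}{2}[m(x)+m(\sigma(x))]$ into the d'Alembert equation and expanding with the multiplicativity of $m$ together with $\sigma\circ\sigma=\mathrm{id}$ and additivity of $\sigma$ gives the identity at once. I do not expect any genuine obstacle here. The only slightly delicate point is the passage from $\chi$ on $S^2$ to $m$ on $S$, and this works cleanly because the diagonal embedding $x\mapsto(x,x)$ is a semigroup homomorphism $S\hookrightarrow S^2$ which intertwines $\sigma$ with $(\sigma,\sigma)$, so the restriction of a multiplicative function on $S^2$ to the diagonal is automatically multiplicative on $S$.
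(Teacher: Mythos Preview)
Your argument is correct and follows essentially the same route as the paper: specialize Theorem~\ref{t1} with $\tau=\sigma$ and pass to the diagonal $g(x)=f(x,x)$, $m(x)=\chi(x,x)$. The paper compresses this to a single sentence, whereas you make the reduction explicit by choosing the particular lift $f(x,z):=g(x)$; this is a clean way to justify the paper's hint, since the diagonal embedding $x\mapsto(x,x)$ is a semigroup homomorphism intertwining $\sigma$ with $(\sigma,\sigma)$, exactly as you note.
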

\begin{proof}  In Theorem \ref {t1}, it suffice to take $z=x,\; y=w,\; \tau=\sigma$ and $g(x):=f(x,x)$ for all $x\in S$.

\end{proof}


\section{General solution of Eq. \eqref{bijns}}\label{jnsen}
\begin{thm}\label{t2}
Let $(S,+)$ be a commutative semigroup,  $G$ a 2-cancellative
abelian group, and let $\sigma,\tau: S\to S$ be involutions. Then, the general solution $f : S^2 \to G$ of the  Jensen type equation:
\begin{align}\label{jnsn}
f(x+y,z+w)+f(x+\sigma(y),z+\tau(w)) =2f(x,z),\quad x,y,z,w\in S,
\end{align}
is
\begin{align}\label{jnsn1}
f(x,y)=a(x,y)+c,\quad\quad x,y\in S,
\end{align}
where $c\in G$ is an arbitrary constant and $a : S^2 \to G$ is an arbitrary additive
function (i.e., $a(x+y,z+w)=a(x,z)+a(y,w)$ for $x,y,z,w\in S$) with $a(\sigma(x),\tau(y))=-a(x,y)$ for all $x,y\in S$.
\end{thm}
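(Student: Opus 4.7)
The plan mirrors the strategy of Theorem~\ref{t1}: use commutativity of $S$ to swap $(x,z)\leftrightarrow(y,w)$ in \eqref{jnsn} and combine with the original in order to isolate first an antisymmetry relation and then an additivity relation. First I would swap $(x,z)$ with $(y,w)$ (legal by commutativity), obtaining
$$f(x+y,z+w)+f(\sigma(x)+y,\tau(z)+w)=2f(y,w),$$
and subtract this from \eqref{jnsn} to get
$$f(\sigma(x)+y,\tau(z)+w)-f(x+\sigma(y),z+\tau(w))=2f(y,w)-2f(x,z). \quad(\mathrm{A})$$
Replacing $(x,z)$ by $(\sigma(x),\tau(z))$ in $(\mathrm{A})$, and separately $(y,w)$ by $(\sigma(y),\tau(w))$ in $(\mathrm{A})$, and then adding the two resulting equations causes the $f(x+y,z+w)$- and $f(\sigma(x)+\sigma(y),\tau(z)+\tau(w))$-terms to cancel, and (after $2$-cancellation in $G$) leaves
$$f(x,z)+f(\sigma(x),\tau(z))=f(y,w)+f(\sigma(y),\tau(w))$$
for all $x,y,z,w\in S$, so this common value is a single element $d\in G$.

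The one delicate point is manufacturing a $c\in G$ with $2c=d$, since $G$ is merely $2$-cancellative, not $2$-divisible. The trick is that $x+\sigma(x)$ is $\sigma$-fixed and $z+\tau(z)$ is $\tau$-fixed, so applying the identity $f(u,v)+f(\sigma(u),\tau(v))=d$ at $(u,v)=(x+\sigma(x),z+\tau(z))$ yields $2f(x+\sigma(x),z+\tau(z))=d$. By $2$-cancellativity, $f(x+\sigma(x),z+\tau(z))$ does not depend on $(x,z)$, so I can legitimately take $c:=f(x_0+\sigma(x_0),z_0+\tau(z_0))$ for any chosen $(x_0,z_0)\in S^2$, and then $2c=d$. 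This is the only step where $2$-cancellativity is truly indispensable; the rest is routine.

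Setting $a:=f-c$, the functional equation \eqref{jnsn} passes unchanged to $a$ (the added constants $2c$ cancel on both sides), and the identity of the first paragraph becomes the desired antisymmetry $a(\sigma(x),\tau(z))=-a(x,z)$. To finish, I would swap $(x,z)\leftrightarrow(y,w)$ in the equation for $a$ and add it to the original equation for $a$; using antisymmetry together with additivity of $\sigma$ and $\tau$, the two cross terms $a(x+\sigma(y),z+\tau(w))$ and $a(\sigma(x)+y,\tau(z)+w)$ turn out to be negatives of one another and cancel, leaving
$$2a(x+y,z+w)=2a(x,z)+2a(y,w),$$
and a final application of $2$-cancellativity gives the additivity of $a$. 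The converse implication is an immediate substitution, so the form \eqref{jnsn1} follows. I expect no further obstacle beyond the detour through $c$ handled in the second paragraph.
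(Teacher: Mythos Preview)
Your argument is correct, and it follows a genuinely different path from the paper's proof. The paper never isolates the antisymmetry relation first; instead it begins with the substitution $(y,w)\mapsto(y+\sigma(y),w+\tau(w))$ to obtain the translation identity
\[
f(x+y+\sigma(y),\,z+w+\tau(w))=f(x,z),
\]
then derives a ``cocycle'' relation
\[
f(x+u+y,z+v+w)+f(x,z)=f(x+u,z+v)+f(x+y,z+w)
\]
by two substitutions and an addition, and specializes $(u,v)=(\sigma(x),\tau(z))$ to read off directly that $a:=f-c$ is additive, with $c=f(x+\sigma(x),z+\tau(z))$; the antisymmetry $a(\sigma(x),\tau(z))=-a(x,z)$ is only checked at the very end by substituting \eqref{jnsn1} back into \eqref{jnsn}.

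Your route---swapping $(x,z)\leftrightarrow(y,w)$, extracting the constancy of $f(x,z)+f(\sigma(x),\tau(z))$, locating $c$ via the $\sigma,\tau$-fixed point $(x+\sigma(x),z+\tau(z))$, and then using the resulting antisymmetry of $a$ to cancel the cross terms---is arguably cleaner, since the antisymmetry does real work in the additivity step rather than appearing as an afterthought. The paper's approach, on the other hand, reaches additivity without ever invoking antisymmetry, which shows that the additive structure is in some sense prior. Both proofs use $2$-cancellativity in exactly the same essential places (collapsing $2\,\cdot=2\,\cdot$ identities and pinning down $c$).
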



\begin{proof}
Replacing $(y,w)$ by $(y+\sigma(y),w+\tau(w))$ in \eqref{jnsn}, we obtain
\begin{align}\label{izdin0}
f(x+y+\sigma(y),z+w+\tau(w))=f(x,z),\quad x,y,z,w\in S.
\end{align}
Putting $(x,z)=(x+u,z+v)$ in \eqref{jnsn}, we get 
\begin{align}\label{izdin1}
f(x+u+y,z+v+w)+f(x+u+\sigma(y),z+v+\tau(w))=2f(x+u,z+v)
\end{align}
for all $x,y,z,u,v,w\in S$. Interchanging $u,y$ and $v,w$ in \eqref{izdin1}, we find
\begin{align}\label{izdin2}
f(x+u+y,z+v+w)+f(x+y+\sigma(u),z+w+\tau(v))=2f(x+y,z+w)
\end{align}
for all $x,y,z,u,v,w\in S$. Adding the last two equations and using \eqref{jnsn}, we have
\begin{align}\label{izdin3}
f(x+u+y,z+v+w)+f(x,z)=f(x+u,z+v)+f(x+y,z+w)
\end{align}
for all $x,y,z,u,v,w\in S$. Replacing $(u,v)$ by $(\sigma(x),\tau(z))$ in \eqref{izdin3}, we get
\begin{align}\label{izdin4}
f(x+\sigma(x)+y,z+\tau(z)+w)+f(x,z)=&f(x+\sigma(x),z+\tau(z))\\&+f(x+y,z+w)\nonumber
\end{align}
for all $x,y,z,,w\in S$. From \eqref{izdin4} and \eqref{izdin0}, we obtain
\begin{align}\label{izdin5}
f(y,w)+f(x,z)=f(x+\sigma(x),z+\tau(z))+f(x+y,z+w)
\end{align}
for all $x,y,z,w\in S$. Interchanging $x,y$ and $z,w$ in \eqref{izdin5}, we have
\begin{align}\label{izdin6}
f(x,z)+f(y,w)=f(y+\sigma(y),w+\tau(w))+f(x+y,z+w)
\end{align}
for all $x,y,z,w\in S$. It follows from \eqref{izdin5}, \eqref{izdin6} that
$$f(x+\sigma(x),z+\tau(z))=f(y+\sigma(y),w+\tau(w)),\quad x,y,z,w\in S.$$
Then $F(x+\sigma(x),z+\tau(z))$ is a constant set, say $c\in G$. So \eqref{izdin6} gives
$$f(x,z)-c+f(y,w)-c=f(x+y,z+w)-c,\;\;x,y,z,w\in S.$$
Putting $a(x,z):=f(x,z)-c$ for $x,y\in S$, so $$a(x+y,z+w)=a(x,z)+a(y,w)$$ for all $x,y,z,w\in S$, this means that $a$ is additive. Putting \eqref{jnsn} into \eqref{jnsn}1 we find $a(\sigma(x),\tau(y))=-a(x,y)$ for all $x,y\in S$ and this completes the proof. 
\end{proof}

\begin{rem}
Here too, the commutativity of S can be replaced by the conditions \eqref{c1} and \eqref{c2}.
\end{rem}

\begin{cor}[\rm{\cite[Theorem 2]{sino}}]
Let $(S,+)$ be a commutative semigroup,   $G$ be a 2-cancellative
abelian group, and let $\sigma: S\to S$ be involution. Then, the general solution $g : S \to G$ of the  Jensen  equation:
\begin{align}\label{dalbtype}
g(x+y)+g(x+\sigma(y)) =2g(x),\quad x,y\in S,
\end{align}
is
\begin{align}\label{dalbtype1}
g(x)=\psi(x)+a,\quad\quad x\in S,
\end{align}
where $a\in G$ is an arbitrary constant and $\psi : S \to G$ is an arbitrary additive
function (i.e., $\psi(x+y)=\psi(x)+\psi(y)$) with $\psi(\sigma(x))=-\psi(x)$ for all $x\in S$.
\end{cor}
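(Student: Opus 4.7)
The plan is to derive this corollary from Theorem \ref{t2} by the same specialization trick used to obtain the previous corollary from Theorem \ref{t1}. Specifically, I would take the four-variable equation \eqref{jnsn} and set $z=x$, $w=y$, and $\tau=\sigma$, then define $g(x):=f(x,x)$ for all $x\in S$. Under this substitution the left-hand side of \eqref{jnsn} becomes $f(x+y,x+y)+f(x+\sigma(y),x+\sigma(y))=g(x+y)+g(x+\sigma(y))$ and the right-hand side becomes $2f(x,x)=2g(x)$, so any solution $g$ of the Jensen equation on $S$ can be viewed as the diagonal restriction of a solution $f$ of \eqref{jnsn}. Conversely, given $g$ one can simply take $f(x,z):=g(x)$ (or any suitable extension) to fit the hypothesis, so it is enough to extract the diagonal of the representation provided by Theorem \ref{t2}.

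Applying Theorem \ref{t2}, I obtain an additive $a:S^2\to G$ with $a(\sigma(x),\tau(y))=-a(x,y)$ and a constant $c\in G$ such that $f(x,y)=a(x,y)+c$. Setting $\psi(x):=a(x,x)$ and using $\tau=\sigma$, I would verify the two required properties in turn: first, by additivity of $a$,
\[
\psi(x+y)=a(x+y,x+y)=a(x,x)+a(y,y)=\psi(x)+\psi(y),
\]
so $\psi$ is additive on $S$; second,
\[
\psi(\sigma(x))=a(\sigma(x),\sigma(x))=a(\sigma(x),\tau(x))=-a(x,x)=-\psi(x).
\]
Hence $g(x)=f(x,x)=\psi(x)+c$, which is the claimed form with $a:=c$.

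For the converse direction, I would simply plug $g(x)=\psi(x)+a$ back into $g(x+y)+g(x+\sigma(y))$, use additivity of $\psi$ to split it into $\psi(x)+\psi(y)+\psi(x)+\psi(\sigma(y))+2a$, and then apply $\psi(\sigma(y))=-\psi(y)$ to cancel the $\psi(y)$ terms and obtain $2\psi(x)+2a=2g(x)$.

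There is no real obstacle here; the statement is a pure specialization and the only thing to watch is that the diagonal $\psi(x)=a(x,x)$ of a biadditive-type additive function really does inherit both additivity and the involution-sign condition, which follows directly from the hypotheses on $a$ after setting $\tau=\sigma$. The main place to be careful is ensuring the substitution $z=x$, $w=y$, $\tau=\sigma$ is legitimate in the application of Theorem \ref{t2}, which it is, since the theorem allows arbitrary involutions and an arbitrary commutative semigroup.
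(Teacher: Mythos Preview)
Your proposal is correct and follows exactly the same approach as the paper, which simply records the specialization $z=x$, $w=y$, $\tau=\sigma$, $g(x):=f(x,x)$ in Theorem~\ref{t2}. You have merely supplied the details the paper omits---the explicit check that $\psi(x):=a(x,x)$ inherits additivity and the sign condition, and the converse verification---but the underlying argument is identical.
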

\begin{proof}  In Theorem \ref{t2}, it suffice to take $z=x,\; y=w,\; \tau=\sigma$ and $g(x):=f(x,x)$ for all $x\in S$.

\end{proof}
In the proof of the next theorem we shall need the following corollary.

\begin{cor}\label{cor2}
Let $S$, $G$ and $\sigma,\tau$ be as in Theorem \ref{t2}.  Then, the general solution $h : S^2\times S^2 \to G$ of the  Jensen type equation:
\begin{align*}
h((x+y,z+w),(t,s))+h((x+\sigma(y),&z+\tau(w)),(t,s)) =2h((x,z),(t,s)),\\&\quad x,y,z,w,t,s\in S,
\end{align*}
is
\begin{align*}
h((x,y),(t,s))=a((x,y),(t,s))+c(t,s),\quad\quad x,y,t,s\in S,
\end{align*}
where $c:S^2\to G$ is an arbitrary function and $a : S^2\times S^2 \to G$ is an arbitrary 
function additive (i.e., $a((x+y,z+w),(t,s))=a((x,z),(t,s))+a((y,w),(t,s))$ for $x,y,z,w,t,s\in S$) with $a((\sigma(x),\tau(y)),(t,s))=-a((x,y),(t,s))$ for all $x,y,t,s\in S$.
\end{cor}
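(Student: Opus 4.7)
The plan is to reduce this corollary to a parametric application of Theorem \ref{t2}. For each fixed pair $(t,s)\in S^2$, I would introduce the auxiliary function $f_{t,s}:S^2\to G$ defined by
\[
f_{t,s}(x,z):=h((x,z),(t,s)),\qquad x,z\in S.
\]
The hypothesis on $h$, applied with $(t,s)$ frozen, says precisely that $f_{t,s}$ satisfies the Jensen type equation \eqref{jnsn}. Hence Theorem \ref{t2} is directly applicable and yields, for each $(t,s)$, an additive function $a_{t,s}:S^2\to G$ satisfying $a_{t,s}(\sigma(x),\tau(y))=-a_{t,s}(x,y)$ and a constant $c_{t,s}\in G$ such that
\[
f_{t,s}(x,y)=a_{t,s}(x,y)+c_{t,s},\qquad x,y\in S.
\]

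The next step is simply to package this parametric dependence into the desired functions. I would set
\[
a((x,y),(t,s)):=a_{t,s}(x,y),\qquad c(t,s):=c_{t,s},\qquad x,y,t,s\in S.
\]
Unwinding the definition gives the announced representation $h((x,y),(t,s))=a((x,y),(t,s))+c(t,s)$. The additivity of $a$ in its first pair of arguments, i.e.\ $a((x+y,z+w),(t,s))=a((x,z),(t,s))+a((y,w),(t,s))$, is inherited directly from the additivity of each $a_{t,s}$, and similarly the symmetry relation $a((\sigma(x),\tau(y)),(t,s))=-a((x,y),(t,s))$ follows from the corresponding property of $a_{t,s}$.

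I do not anticipate any genuine obstacle: the second pair $(t,s)$ plays the role of a mute parameter in the functional equation, so no regularity or additivity in $(t,s)$ is asserted or required (indeed $c:S^2\to G$ is left arbitrary in the statement). The converse — that any $h$ of the prescribed form satisfies the equation — is routine: additivity of $a$ in the first slot combined with $a((\sigma(y),\tau(w)),(t,s))=-a((y,w),(t,s))$ makes the two summands on the left-hand side collapse to $2a((x,z),(t,s))+2c(t,s)=2h((x,z),(t,s))$. Thus the whole proof reduces to the one-line remark that fixing $(t,s)$ turns Corollary \ref{cor2} into an instance of Theorem \ref{t2}.
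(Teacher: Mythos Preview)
Your proposal is correct and matches the paper's intended approach: the paper's proof consists of the single word ``Obvious,'' and what you have written is precisely the spelled-out version of that observation---freeze $(t,s)$, apply Theorem~\ref{t2} to the resulting function of $(x,z)$, and then repackage the parametric constants and additive parts into $c$ and $a$.
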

\begin{proof}
Obvious.
\end{proof}
\section{General solution of Eq. \eqref{biquad}}\label{quad}
\begin{thm}\label{t3}
Let $(S,+)$ be a commutative semigroup,  $H$  an abelian group,
uniquely divisible by 2, and let $\sigma,\tau: S\to S$ be involutions. Then, the general solution $f : S^2 \to H$ of the  quadratic type equation:
\begin{align}\label{qua}
f(x+y,z+w)+f(x+\sigma(y),z+\tau(w)) =2f(x,z)+2f(y,w),\quad x,y,z,w\in S,
\end{align}
is
\begin{align}\label{qua1}
f(u)=B(u,u)+T(u),\;\;u=(x,z),\;\; x,y\in S,
\end{align}
where $T : S^2\to  H$ is an arbitrary additive function (i.e., $T(u+v)=T(u)+T(v)$ where $u=(x,z),\;v=(y,w)$ and $ u+v=(x+y,z+w)$) with $T(\sigma(x),\tau(z))=T(x,z)$ and $B :  S^2\times S^2\to H$ is an arbitrary symmetric biadditive function (i.e., $B(u,v)=B(v,u)$ and $B(u+v,r)=B(u,r)+B(v,r)$ for  $u,v,r\in S^2$) with $B((\sigma(x), \tau(z)),(y,w))=-B((x, z),(y,w))$.
\end{thm}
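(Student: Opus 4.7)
The plan is to deduce $\sigma,\tau$-invariance of $f$, construct the biadditive component $B$ via a first difference together with an application of the Jensen-type result from Section~\ref{jnsen}, and finally recover the additive part $T$ by subtracting $B(u,u)$.

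First, substituting $(y,w) \mapsto (\sigma(y), \tau(w))$ in \eqref{qua} leaves the left-hand side unchanged and yields $f(\sigma(y), \tau(w)) = f(y,w)$ by $2$-cancellativity of $H$ (which follows from unique divisibility by~$2$). Writing $u = (x,z)$ and $\bar u := (\sigma(x), \tau(z))$, equation \eqref{qua} reads $f(u+v) + f(u + \bar v) = 2f(u) + 2f(v)$. For a fixed $v_0 \in S^2$, define
\[
F(u, v_0) := f(u + v_0) - f(u + \bar v_0).
\]
Applying \eqref{qua} once with the first pair equal to $u + v_0$ and once with it equal to $u + \bar v_0$ and subtracting, I get $F(u+v, v_0) + F(u + \bar v, v_0) = 2 F(u, v_0)$, which is exactly the Jensen-type equation of Section~\ref{jnsen}. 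Corollary~\ref{cor2} then gives $F(u, v_0) = a(u, v_0) + c(v_0)$ with $a$ additive in $u$ and $a(\bar u, v_0) = -a(u, v_0)$. A direct computation using $f(\bar w) = f(w)$ shows $F(\bar u, v_0) = -F(u, v_0)$, forcing $c(v_0) = 0$ by $2$-cancellativity; so $F(\cdot, v_0)$ is additive.

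Additivity of $F$ in the second argument proceeds in the same spirit: comparing \eqref{qua} applied with $(x,z) = u + v_0$, $(y,w) = v_0'$ and with $(x,z) = u + \bar v_0$, $(y,w) = \bar v_0'$, then symmetrizing in $v_0 \leftrightarrow v_0'$ and adding, I obtain $F(u, v_0 + v_0') = F(u, v_0) + F(u, v_0')$. Setting $B(u, v) := \tfrac{1}{4} F(u, v)$, biadditivity is immediate; symmetry $B(u,v) = B(v,u)$ follows from commutativity of $+$ on $S^2$ together with the identity $f(v + \bar u) = f(u + \bar v)$ (an instance of $f(\bar w) = f(w)$), and $B(\bar u, v) = -B(u, v)$ is direct. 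Finally, set $T(u) := f(u) - B(u, u)$; expanding $B(u+v, u+v) = B(u,u) + 2B(u,v) + B(v,v)$ and using $4B(u,v) = f(u+v) - f(u+\bar v)$ together with \eqref{qua} yields $T(u+v) = T(u) + T(v)$, while $T(\bar u) = T(u)$ follows from $f(\bar u) = f(u)$ and $B(\bar u, \bar u) = B(u,u)$. The converse is a routine verification.

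The main obstacle I anticipate is additivity of $B$ in the second slot: applying the equation naively produces a spurious cross-term $f(u + v_0 + \bar v_0') - f(u + \bar v_0 + v_0')$ that cannot be eliminated directly; killing it requires symmetrizing in $v_0 \leftrightarrow v_0'$ and exploiting that this term is antisymmetric under that swap.
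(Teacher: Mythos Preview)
Your proposal is correct and follows essentially the same route as the paper: define the difference $F(u,v)=f(u+v)-f(u+\bar v)$ (the paper's $4h$), show it satisfies the Jensen-type equation in the first variable, apply Corollary~\ref{cor2}, kill the constant via $F(\bar u,v)=-F(u,v)$, and then split off the additive remainder $T$.

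One simplification worth noting: the ``main obstacle'' you flag---additivity of $F$ in the second slot---is not a genuine obstacle at all. You already prove symmetry $F(u,v)=F(v,u)$ from $f(u+\bar v)=f(\bar u+v)$; combined with additivity in the first slot, this gives additivity in the second slot for free. The paper does exactly this, and never has to confront the cross-term $f(u+v_0+\bar v_0')-f(u+\bar v_0+v_0')$ or the symmetrization in $v_0\leftrightarrow v_0'$. Your argument for that step is correct, just redundant. The only other difference is cosmetic: the paper identifies $T(x,z)$ explicitly as $\tfrac12 f(x+\sigma(x),z+\tau(z))$ and checks its additivity by a direct substitution into \eqref{qua}, whereas you define $T:=f-B(\cdot,\cdot)$ and verify additivity via the biadditive expansion; both work equally well.
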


\begin{proof}
Replacing $(y,w)$ by $(\sigma(y),\tau(w))$ in \eqref{qua}, we see that
\begin{align}\label{qua2}
f(y,w)=f(\sigma(y),\tau(w)),\;\; y,w\in S,
\end{align}
so, 
\begin{align}\label{qua2+}
f(x+\sigma(y),z+\tau(w))=f(\sigma(x)+y,\tau(z)+w),\;\; x,z,y,w\in S.
\end{align}

Also in \eqref{qua}, we replace $(x,z)$ first by $(x +t,z+s)$ and then by $(x +\sigma(t),z+\tau(s))$, and find
\begin{align*}
f(x+t+y,z+s+w)&+f(x+t+\sigma(y),z+s+\tau(w)) \\&=2f(x+t,z+s)+2f(y,w),\quad x,y,z,w,t,s\in S,
\end{align*}
\begin{align*}
f(x+\sigma(t)&+y,z+\tau(s)+w)+f(x+\sigma(t)+\sigma(y),z+\tau(s)+\tau(w)) \\&=2f(x+\sigma(t),z+\tau(s))+2f(y,w),\quad x,y,z,w,t,s\in S.
\end{align*}
By subtraction we obtain
\begin{align}\label{qua3}
h((x+y,z+w),(t,s))+h((x+\sigma(y),z+\tau(w)),(t,s))=2h((x,z),(t,s))
\end{align}
for all $x,z,t,s,y,w\in S,$ where 
\begin{align}\label{qua4}
h((x,z),(t,s)):&=\frac{f(x+t,z+s)-f(x+\sigma(t),z+\tau(s))}{4}
\end{align}
for all $x,z,t,s\in S$. According to Corollary \ref{cor2}, we have $h(u,r)=B(u,r)+c(r)$ where $B : S^2\times S^2 \to H$ is an arbitrary function additive, i.e., $B(u+v,r)=B(u,r)+B(v,r)$ for all $u,v,r\in S^2$ (with $u=(x,z),\;v=(y,w),\; u+v=(x+y,z+w)$ and $r=(t,s)$) and $B((\sigma(x),\tau(y)),r)=-B((x,y),r)$. Hence 
$$h((x,z),r)+h((\sigma(x),\tau(z)),r)=2c(r).$$
On the other hand, from \eqref{qua4} and \eqref{qua2+}, we have $h(u,r)=h(r,u)$ and, in view of \eqref{qua2} and \eqref{qua2+}, $h((\sigma(x),\tau(z)),r)=-h((x,z),r).$ So, $c(r)=0$ and $h(u,r)=b(u,r)$, that is, $b$ is symmetric and biadditive.

Now from \eqref{qua} with $(y,w)=(x,z)$, we find $f(2x,2z)=4f(x,z)-f(x+\sigma(x),z+\tau(z))$ and from \eqref{qua3} with $(t,s)=(x,z)$, we have $h((x,z),(x,z))=\frac{1}{4}[f(2x,2z)-f(x+\sigma(x),z+\tau(z))]$. Hence
$$f(u)=B(u,u)+\frac{1}{2}f(x+\sigma(x),z+\tau(z)).$$

Putting $(x,y)=(x+\sigma(x),y+\sigma(y))$ and $(z,w)=(z+\tau(z),w+\tau(w))$ in \eqref{qua}, we get
\begin{align*}
f(x+\sigma(x)+y+\sigma(y),z+\tau(z)&+w+\tau(w))=f(x+\sigma(x),z+\tau(z))\\&+f(y+\sigma(y),w+\tau(w)),\quad x,y,z,w\in S,
\end{align*}
which means that the function $T(x,z):=\frac{1}{2}f(x+\sigma(x),z+\tau(z))$ is additive on $S^2$. Also 
$T(\sigma(x),\tau(z))=T(x,z)$ for all $x,z\in S$. 

Conversely, it is easy to check that any function $f$ of the form \eqref{qua1} satisfies \eqref{qua}. This completes the proof of the theorem.
\end{proof}

\begin{cor}
Let $(S,+)$ be a commutative semigroup,  $H$  an abelian group,
uniquely divisible by 2, and let $\sigma: S\to S$ be involution. Then, the general solution $g : S \to H$ of the  quadratic  equation:
$$g(x+y)+g(x+\sigma(y)) =2f(x)+2f(y),\quad x,y\in S,$$
is
$$f(x)=b(x,x)+\psi(x),\;\; x\in S,$$
where $\psi : S\to  H$ is an arbitrary additive function  with $\psi(\sigma(x))=\psi(x)$ and $b:  S^2\to H$ is an arbitrary symmetric biadditive function with $b(\sigma(x), y)=-b(x,y)$.
\end{cor}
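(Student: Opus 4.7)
The plan is to deduce the corollary from Theorem \ref{t3} via a diagonal specialization, exactly mirroring the strategy used for the two previous corollaries in the paper.

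First, I would set $\tau:=\sigma$ and, starting from any solution $g:S\to H$ of the one-variable quadratic equation with involution, lift it to a function $f:S^2\to H$ by putting $f(x,z):=g(x)$. A one-line substitution shows that
$$f(x+y,z+w)+f(x+\sigma(y),z+\tau(w))=g(x+y)+g(x+\sigma(y))=2g(x)+2g(y)=2f(x,z)+2f(y,w),$$
so $f$ satisfies \eqref{qua} and Theorem \ref{t3} applies.

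Theorem \ref{t3} then yields $f(u)=B(u,u)+T(u)$ for some additive $T:S^2\to H$ with $T(\sigma(x),\tau(z))=T(x,z)$ and some symmetric biadditive $B:S^2\times S^2\to H$ with $B((\sigma(x),\tau(z)),(y,w))=-B((x,z),(y,w))$. I would then define $b(x,y):=B((x,x),(y,y))$ and $\psi(x):=T(x,x)$. Symmetry and biadditivity of $b$ pass through from $B$, and additivity of $\psi$ from $T$, because $(x+y,x+y)=(x,x)+(y,y)$ in $S^2$. Evaluating the diagonal of the conditions on $T$ and $B$ (with $\tau=\sigma$, $z=x$, $w=y$) gives $\psi(\sigma(x))=\psi(x)$ and $b(\sigma(x),y)=-b(x,y)$. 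Restricting back to the diagonal, $g(x)=f(x,x)=b(x,x)+\psi(x)$.

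No step here presents a genuine obstacle: this is essentially a formal specialization, which is why the earlier two corollaries in the paper were dispatched in a single sentence. The only thing still to verify is the converse, namely that every $g$ of the form $b(x,x)+\psi(x)$ with the stated properties actually solves $g(x+y)+g(x+\sigma(y))=2g(x)+2g(y)$; this is a short expansion using biadditivity together with the identities $b(\sigma(x),y)=-b(x,y)$ and $\psi(\sigma(x))=\psi(x)$, and is the one place where the hypothesis that $H$ is uniquely $2$-divisible plays no active role, since no halving is needed in this direction.
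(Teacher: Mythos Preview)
Your proposal is correct and follows essentially the same approach as the paper: deduce the one-variable result from Theorem~\ref{t3} by the diagonal specialization $\tau=\sigma$, $z=x$, $w=y$, $g(x)=f(x,x)$. The paper's own proof is a single sentence stating exactly this reduction, so your write-up simply unpacks the details (the lift $f(x,z):=g(x)$, the definitions $b(x,y):=B((x,x),(y,y))$ and $\psi(x):=T(x,x)$, and the converse check) that the paper leaves implicit.
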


\begin{proof}  In Theorem \ref{t3}, it suffice to take $z=x,\; y=w,\; \tau=\sigma$ and $g(x):=f(x,x)$ for all $x\in S$.
\end{proof}

We end the paper with some questions and a conclusion.
\begin{qes}
1) As a future work, what are the general solutions of the following equations:
$$
f(x+y+\alpha,z+w+\beta)+f(x+\sigma(y)+\alpha,z+\tau(w)+\beta) =2f(x,z)f(y,w)
$$
$$
f(x+\sigma(y)+\alpha,z+\tau(w)+\beta)-f(x+y+\alpha,z+w+\beta) =2f(x,z)f(y,w)
$$
$$
f(x+y,z+w)+f(x+\sigma(y),z+\tau(w)) =2f(x,z)g(y,w)
$$
$$
f(x+y,z+w)+g(x+\sigma(y),z+\tau(w)) =h(x,z)
$$
$$
f(x+y,z+w)+f(x+\sigma(y),z+\tau(w)) =2f(x,z)+2g(y,w)
$$
$$
f(x+y,z+w)+g(x+\sigma(y),z+\tau(w)) =2f(x,z)+2h(y,w)
$$
for all $\alpha,\beta,x,y,z,w\in S$, where  $S$is a semi-group and $\sigma,\tau: S\to S$ are involutions.

2) We can also study the Hyers-Ulam stability of the previous functional equations.
\end{qes}

\section*{Conclusion} \label{concla}
In this work, we managed to find general solutions to the functional equations \eqref{bialm}, \eqref{bijns} and \eqref{biquad}. From these equations we can derive a new type of functional equation which will be of utmost importance in the future. This could be a potential future work.

\end{document}